\def\nfrac#1#2{{\textstyle\frac{#1}{#2}}}
\newtheorem{theorem}{Theorem}[section]
\newtheorem{lemma}[theorem]{Lemma}
\newtheorem{remark}[theorem]{Remark}
\def\nfrac#1#2{{\textstyle\frac{#1}{#2}}}
\def\dfrac#1#2{\lower0.15ex\hbox{\large$\frac{#1}{#2}$}}
\renewcommand{\p@enumii}{}
\renewcommand{\p@enumiii}{}
\title{Making Markov chains less lazy}
\author{Catherine Greenhill}
\author{
Catherine Greenhill\\[-0.5ex]
\small School of Mathematics and Statistics\\[-0.5ex]
\small The University of New South Wales\\[-0.5ex]
\small Sydney NSW 2052, Australia\\[-0.5ex]
\small \tt csg@unsw.edu.au}
\date{21 January 2013}
\begin{document}

\maketitle

\begin{abstract}
The mixing time of an ergodic, reversible Markov chain
can be bounded in terms of the eigenvalues of the chain:
specifically, the second-largest eigenvalue and the 
smallest eigenvalue.  It has become standard to focus only
on the second-largest eigenvalue, by making the Markov
chain ``lazy''.  (A lazy chain does nothing at each step with
probability at least $\nfrac{1}{2}$, and has only nonnegative
eigenvalues.)  
 
An alternative approach to bounding the smallest eigenvalue was given by
Diaconis and Stroock~\cite[Proposition 2]{DS91} and
Diaconis and Saloff-Coste~\cite[p.702]{DSC}.
We give examples to show that using this approach
it can be quite easy to obtain a bound on the
smallest eigenvalue of a combinatorial Markov chain which is
several orders of magnitude below the best-known bound on the
second-largest eigenvalue.
\end{abstract}

\section{Introduction}\label{s:intro}

Let $\mathcal{M}$ be an ergodic, reversible Markov chain with
finite state space $\Omega$ and transition matrix $P$.
It is well known that the eigenvalues of $\mathcal{M}$ satisfy 
\[ 1 = \lambda_0 > \lambda_1 \geq \lambda_2 \geq \cdots
           \geq \lambda_{N-1} > -1,\]
where $N=|\Omega|$. 
We refer to $\lambda_{N-1}$ as the \emph{smallest eigenvalue} 
of $\mathcal{M}$.  

The connection between the mixing time of a Markov chain and its
eigenvalues is well-known (see~\cite[Proposition 1]{sinclair}):
\begin{equation}
\label{mix-time} \tau(\varepsilon) \leq (1-\lambda_\ast)^{-1}\, 
   \ln \frac{1}{\epsilon\, \pi_{\min}}
\end{equation}
where $\tau(\varepsilon)$ denotes the mixing time of the Markov chain,
$\pi_{\min} = \min_{x\in \Omega} \pi(x)$ and
\[ \lambda_\ast = \max\{ \lambda_1, \, |\lambda_{N-1}|\}.\]
When studying the mixing time of a Markov chain $\mathcal{M}$
using (\ref{mix-time}),
the approach which has become standard is to make the chain $\mathcal{M}$
\emph{lazy} by replacing $P$ by $(I+P)/2$,
where $I$ denotes the identity matrix.  Then all eigenvalues of the lazy
chain are nonnegative, and only the second-largest eigenvalue must
be investigated.  

A lazy chain can be implemented so that its expected
running time is the same as the mixing time of the original chain.
So the problem with lazy chains is not their efficiency.
In our opinion, the main problem with lazy Markov chains is conceptual:
in order to prove that a Markov chain is fast, we first slow it down.
The device of using lazy Markov chains has been called 
``crude''~\cite[p.~110]{SJ89} and 
``unnatural''~\cite[Chapter 5]{jerrum-book}.

In this note, we aim to advertise an approach for bounding the
smallest eigenvalue of a Markov chain.
This approach was first proposed by Diaconis and Stroock 
in 1991~\cite[Proposition 2]{DS91}, and a modified version was
presented by 
Diaconis and Saloff-Coste two years later~\cite[p.702]{DSC}
(restated as Lemma~\ref{lazy} below).  
The method of~\cite{DSC} has been applied in~\cite{DSC,DS91,goel},
but in the theoretical computer science community it has become common to
work with lazy chains.  We urge researchers to first try 
the approach of~\cite{DSC,DS91}
before choosing to work with a lazy version of their chain.

Finally we remark that in~\cite{directed} the author wrongly claimed
that their~\cite[Lemma 1.3]{directed} was new, when in fact it is precisely
the result of~\cite[p.702]{DSC}. We sincerely apologise for this error.

\subsection{The method}

See~\cite{jerrum-book} for Markov chain definitions not given here.
Write $\mathcal{G}$ for the underlying directed graph of the Markov chain
$\mathcal{M}$, where $\mathcal{G} = (\Omega, \Gamma)$
and each directed edge $e\in \Gamma$ corresponds to a transition of
$\mathcal{M}$.  
If $P(x,x)>0$ then the edge $xx$ is called a \emph{self-loop}
at $x$. Define $Q(e) = Q(x,y) = \pi(x)P(x,y)$ for the edge $e=xy$.
A walk in $\mathcal{G}$ is a sequence of states
$x_0 x_1 \cdots x_\ell$
such that $P(x_j,x_{j+1})> 0$ for $j=0,\ldots, \ell-1$. 
The walk is \emph{closed} if $x_\ell=x_0$.
If a walk has odd length then we call it an \emph{odd walk}.

For each $x\in\Omega$ let $w_x$ be an  odd 
walk from $x$ to $x$ in $\mathcal{G}$.   
(Such a walk exists for each $x$, since the Markov chain
is aperiodic.)  Define
$\mathcal{W} = \{ w_x : x\in\Omega\}$, a set 
of ``canonical closed odd walks''.
For each transition $e\in\Gamma$ and each $w\in\mathcal{W}$,
let $r(e,w)$ denote the number of times that $e$ appears as a
directed edge of $w$.  We can assume that $r(e,w)\leq 2$ for all
transitions $e$ (indeed, if $e$ is a self-loop then we can assume
that $r(e,w)\leq 1$.)  The \emph{congestion} of $\mathcal{W}$,
denoted by $\eta(\mathcal{W})$, is defined by
\[ \eta(\mathcal{W}) = \max_{e\in\Gamma} \, Q(e)^{-1}\, 
           \sum_{x\in\Omega,\, e\in w_x}\, r(e,w_x)\, \pi(x)\, |w_x|.\]

\begin{lemma} \emph{\cite[p.702]{DSC}}\
Suppose that $\mathcal{M}$ is a reversible, ergodic Markov chain with
state space $\Omega$, and let $\mathcal{W}$ be a set of odd walks
defined as above.  Then
\[ (1 + \lambda_{N-1})^{-1} \leq  \frac{\eta(\mathcal{W})}{2}.\]
\label{lazy}
\end{lemma}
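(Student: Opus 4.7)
The plan is to combine the variational characterisation of $\lambda_{N-1}$ with a Cauchy--Schwarz argument performed along each canonical odd walk; morally, the odd walks here play the role that canonical paths play in the standard bound on $\lambda_1$.

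First I would set up the appropriate Dirichlet-type form. Since $\mathcal{M}$ is reversible, $P$ is self-adjoint with respect to $\langle f,g\rangle_\pi = \sum_x \pi(x) f(x) g(x)$, and Rayleigh--Ritz gives
\[
1 + \lambda_{N-1} \;=\; \min_{f \not\equiv 0}\; \frac{\langle (I+P)f,\, f\rangle_\pi}{\langle f,\, f\rangle_\pi}.
\]
A short calculation, using $Q(x,y)=\pi(x)P(x,y)=\pi(y)P(y,x)$, rewrites the numerator as a sum over directed edges of squared edge-\emph{sums} (rather than edge-\emph{differences} as in the usual Dirichlet form):
\[
\langle (I+P)f,\, f\rangle_\pi \;=\; \tfrac{1}{2}\sum_{(u,v)\in\Gamma} Q(u,v)\,\bigl(f(u)+f(v)\bigr)^2.
\]
So it suffices to prove that $2\|f\|_\pi^2 \leq \eta(\mathcal{W})\,\langle (I+P)f,f\rangle_\pi$ for every $f:\Omega \to \mathbb{R}$.

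Next I would exploit the oddness of each canonical walk $w_x = x_0 x_1 \cdots x_\ell$ (with $x_0 = x_\ell = x$) through the telescoping identity
\[
2f(x) \;=\; \sum_{j=0}^{\ell-1} (-1)^j\, \bigl(f(x_j) + f(x_{j+1})\bigr),
\]
whose internal terms cancel in pairs and whose boundary contribution is $f(x_0) + (-1)^{\ell-1}f(x_\ell) = 2f(x)$ precisely because $\ell$ is odd. Applying Cauchy--Schwarz gives $4 f(x)^2 \leq |w_x|\,\sum_{j=0}^{|w_x|-1} (f(x_j)+f(x_{j+1}))^2$. Multiplying by $\pi(x)$, summing over $x$, and swapping the order of summation so that the outer sum runs over transitions $e=(u,v)\in\Gamma$ yields
\[
4\|f\|_\pi^2 \;\leq\; \sum_{(u,v)\in\Gamma}\bigl(f(u)+f(v)\bigr)^2 \sum_{x:\, e\in w_x} r(e,w_x)\, \pi(x)\, |w_x|.
\]
The inner sum is at most $\eta(\mathcal{W})\, Q(e)$ by the very definition of congestion, so the right-hand side is at most $2\eta(\mathcal{W})\,\langle (I+P)f, f\rangle_\pi$, and substituting into the Rayleigh quotient delivers the lemma.

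The main obstacle I anticipate is the bookkeeping at the swap step when a single transition $e$ is used more than once by some $w_x$: the position-indexed sum $\sum_j (f(x_j)+f(x_{j+1}))^2$ collects $(f(u)+f(v))^2$ once per appearance of $e$, which is exactly why the multiplicity $r(e,w_x)$ must be retained in the definition of $\eta(\mathcal{W})$. Aside from this accounting, the only substantive ingredient is the telescoping identity, and that is the sole place the odd-length hypothesis enters: an even-length walk would instead give $f(x_0) - f(x_\ell) = 0$ on the right-hand side and the argument would collapse. That conceptual fact explains structurally why the method controls the \emph{smallest} eigenvalue (via edge-sums) rather than the second-largest (via edge-differences).
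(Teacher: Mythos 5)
Your proof is correct, and since the paper states this lemma only by citation to Diaconis and Saloff-Coste without reproducing an argument, there is nothing to diverge from: your reconstruction (Rayleigh quotient for $I+P$, the edge-\emph{sum} form of $\langle (I+P)f,f\rangle_\pi$, the alternating telescoping identity along each odd closed walk, Cauchy--Schwarz, and the swap of summation absorbed by $\eta(\mathcal{W})$) is exactly the standard argument of the cited sources. The bookkeeping you flag---retaining the multiplicity $r(e,w_x)$ when regrouping by transitions---is handled correctly.
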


If $|w_x|=1$ for all $x\in\Omega$ then
the bound of Lemma~\ref{lazy} simplifies further to 
\begin{equation}
\label{all-loops}
 (1+\lambda_{N-1})^{-1} \leq \dfrac{1}{2}\operatorname{max}_{x\in\Omega} 
P(x,x)^{-1}.
\end{equation}

\begin{remark}
\emph{
Suppose that the graph underlying a Markov chain $\mathcal{M}$
can be obtained from a connected bipartite graph by adding loops
to an exponentially small proportion of states.
For example, many instances
of the \emph{knapsack chain}~\cite{MS99} satisfy this property.
Since every closed odd walk must traverse at least one of
these self-loop edges, it is very difficult to define a set of
canonical closed odd walks with low congestion.
So Lemma~\ref{lazy} is unlikely to be easy to apply in this case.
}
\end{remark}

\section{Applications of the method}\label{s:applications}

We illustrate the use of Lemma~\ref{lazy}
by applying it to three combinatorial Markov chains.
Our applications are all ergodic and reversible with uniform stationary 
distribution, and no edge will be used more than once
in any walk $w_x$ that we define.  In this case the congestion
can be simplified to
\begin{equation}
\label{simple}
 \eta(\mathcal{W}) = \operatorname{max}_{e\in\Gamma} \, P(e)^{-1}
\, \sum_{x\in\Omega,\,\, e\in w_x} \,  |w_x|,
\end{equation}
where $P(e) = P(x,y) = P(y,x)$ for the transition $e=xy$.

\subsection{The switch chain for sampling regular graphs}

Our first application is to the Markov chain for sampling regular
graphs known as the \emph{switch chain}.  
A transition of the chain is performed as follows:
from the current state $G$  (a $d$-regular graph on vertex set $[n]$)
choose an unordered pair of non-incident edges uniformly at random,
let $G'$ be the multigraph obtained from $G$ by deleting these edges
and inserting a perfect matching
of their four endvertices, selected uniformly at random.   If $G'$ has no 
repeated edges then the new state is $G'$, otherwise it is $G$.

The lazy version
of this chain was analysed by
Cooper et al.~\cite{CDG,CDG-corrigendum}.
Clearly $P(G,G)\geq \nfrac{1}{3}$ for every
state $G$ of this chain, so by (\ref{all-loops}) we immediately 
conclude that
\[ (1 + \lambda_{N-1})^{-1}\leq \nfrac{3}{2}.
\]
This is several orders of magnitude smaller than the best-known
bound on $(1-\lambda_1)^{-1}$, which is $O(d^{23} n^8)$ 
(see~\cite{CDG-corrigendum}).

\subsection{Jerrum and Sinclair's matchings chain }

The next application is to the well-known
Markov chain for sampling perfect and near-perfect matchings
of a fixed graph $G$.  A transition of the chain is performed as
follows: from the current state $M$ (which is a
perfect or near-perfect matching of $G$), choose an edge $e\in E(G)$
uniformly at random.  If $M$ is a perfect matching and $e\in M$ then the
new state is $M - \{e\}$.  If $M$ is a near-perfect matching and both
endvertices of $e$ are unmatched in $M$ then the new state is $M \cup \{e\}$.
If $M$ is a near-perfect matching, and exactly one endvertex of $e$
is unmatched in $M$ then let $e'$ be the edge of $M$ which matches the other
endvertex of $e$: the new state is $(M - \{e'\})\cup\{e\}$.
In all other cases the new state is $M$.

The lazy version of this chain was analysed by Jerrum and Sinclair~\cite{JS89,JS96},
If $G$ itself is not a
perfect matching then $P(M,M)\geq 1/|E|$ for all states $M$
of the chain (that is, for all perfect or near-perfect matchings $M$
of $G$). 
Therefore (\ref{all-loops}) implies that
\[ (1 + \lambda_{N-1})^{-1} \leq \frac{|E|}{2}.\]
This bound is at least a factor $n^2$ smaller than the smallest-known
bound on $(1-\lambda_1)^{-1}$, which is $O(n|E|q(n))$
for graphs $G$ for which the ratio between the number of
near-perfect and perfect matchings is $q(n)$ (see~\cite{JS96}). 

\subsection{A heat-bath chain for sampling contingency tables}

Our final application involves contingency tables.
Let $\mathbf{r}=(r_1,\ldots, r_m)$ and $\mathbf{c}=(c_1,\ldots, c_n)$ 
be two vectors of positive 
integers with the same sum.  A \emph{contingency table} with row sums $r$ and 
column sums $c$ is an $m\times n$ matrix $X=(x_{i,j})$ with nonnegative 
integer entries, such that
$\sum_{j=1}^n x_{i,j} = r_i$ for $i=1,\ldots, m$ and 
$\sum_{i=1}^m x_{i,j} = c_j$ for $j=1,\ldots, n$.
Let $\Omega_{\mathbf{r},\mathbf{c}}$ denote the set of all contingency tables 
with row sums $\mathbf{r}$ and column sums $\mathbf{c}$.
To avoid trivialities we assume throughout this section that
$\min\{ m,n\}\geq 2$.

Dyer and Greenhill~\cite{DG-contingency} proposed a Markov chain for sampling
contingency tables, which we will call the \emph{contingency chain}.  
A transition of the chain is performed as follows:  choose a $2\times 2$ 
subsquare of the current table uniformly at random, then  
replace this $2\times 2$ subsquare by a uniformly chosen $2\times 2$
nonnegative integer matrix with the same row and column sums.

The \emph{lazy} contingency
chain does nothing at each step with probability $\nfrac{1}{2}$, and otherwise
performs a transition as described above.
Cryan et al.~\cite{CDGJM} analysed the lazy contingency chain for a constant
number of rows.  They proved that $(1-\lambda_1)^{-1}\leq n^{f(m)}$ for
$m$-rowed contingency tables with $n$ columns, where $m$ is constant
and $f(m)$ is an expression satisfying $f(m)\geq 68m^4$.
We now analyse the smallest eigenvalue of the (non-lazy) contingency chain.

There is always a positive probability that the next state $X'$ of the 
contingency chain is equal to the current state $X$,  
since the heat-bath step may simply replace the
chosen $2\times 2$ subsquare with its current contents.  
However, 
the minimum of $P(X,X)$ over all states $X$ depends on
$\mathbf{r}$ and $\mathbf{c}$.
(To see this, consider $2\times 2$ squares.)
We prefer a bound which depends only on $m$ and $d$, and so we do not
simply apply (\ref{all-loops}).

\begin{lemma}
\label{non-lazy-contingency}
Let $\mathbf{r}=(r_1,\ldots, r_m)$ and $\mathbf{c}=(c_1,\ldots, c_n)$ 
be vectors of positive 
integers with a common sum which satisfy 
\[ r_1\geq r_2\geq \cdots \geq r_m\quad \text{ and } \quad
   c_1\geq c_2 \geq \cdots \geq c_n.\]
Suppose that $\min\{r_1,\, c_1\}\geq 2$ and $\max\{ m,n\}\geq 3$.
The smallest eigenvalue of the contingency chain on 
$\Omega_{\mathbf{r},\mathbf{c}}$ satisfies
\[ (1+\lambda_{N-1})^{-1} \leq 45\, m^3n^3. \]
\end{lemma}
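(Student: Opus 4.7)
The plan is to apply Lemma~\ref{lazy} with a family $\mathcal{W}=\{w_X\}$ of closed odd walks of length three, and to bound $\eta(\mathcal{W})$ via the simplified formula~(\ref{simple}). Since the contingency chain commutes with transposition of tables, I may assume $n\geq 3$. For each state $X$, $w_X$ will be an ``L-walk'': I choose two rows $i_1\neq i_2$ and three distinct columns $j_1,j_2,j_3$ (depending on $X$) such that $X_{i_1,j_1},\, X_{i_2,j_2},\, X_{i_2,j_3}\geq 1$, and perform three single-unit swaps on the subsquares $\{i_1,i_2\}\times\{j_1,j_2\}$, $\{i_1,i_2\}\times\{j_2,j_3\}$, and $\{i_1,i_2\}\times\{j_1,j_3\}$, arranged so that the three shifts cancel and the table returns to $X$. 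Each intermediate table has non-negative entries, so every step is a legal chain transition; the walk is closed of odd length~3 and uses no edge twice.

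To show that a valid choice of $(i_1,i_2,j_1,j_2,j_3)$ exists for every $X$, I use $r_1,c_1\geq 2$ and $n\geq 3$. Since $c_1\geq 2$, either two rows have positive entries in column~$1$ (providing $i_1,i_2$) or a single row has entry $\geq 2$ there; in both cases the hypotheses $r_i\geq 1$ and $n\geq 3$ then supply a row with a positive entry in a second column, and a further column whose entry is positive in one of the two chosen rows. A few degenerate configurations, in which the support of $X$ is concentrated on a single row or on two columns, have to be treated by rotating the orientation of the L-shape or by choosing a different row pair.

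To bound the congestion, I fix a transition $e$ corresponding to a $2\times 2$ subsquare swap on rows $I$ and columns $J$. Any state $X$ with $e\in w_X$ must have $I$ as its row pair and $J$ as one of the three column pairs of its L-walk; so, given $e$ together with the position of $e$ in the walk ($3$ choices) and the ``missing'' third column ($\leq n-2$ choices), the starting state $X$ is uniquely determined. Hence $|\{X:e\in w_X\}|\leq 3(n-2)$. Since $\pi$ is uniform, $P(e)=\bigl(\binom{m}{2}\binom{n}{2}(K+1)\bigr)^{-1}$, where $K+1$ is the number of $2\times 2$ non-negative integer matrices sharing the row and column sums of the swapped subsquare, and (\ref{simple}) yields
\[
\eta(\mathcal{W})\leq 9(n-2)\,\binom{m}{2}\binom{n}{2}(K+1).
\]

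The principal obstacle is the factor $K+1$, which is bounded by $\min(r_{i_1},r_{i_2},c_{j_1},c_{j_2})+1$ and so is not controlled a priori by $m$ and $n$. To reach $\eta(\mathcal{W})/2\leq 45\,m^3n^3$, I need $K+1=O(m)$ for every subsquare used by any $w_X\in\mathcal{W}$. My plan is to refine the choice of $(i_1,i_2,j_1,j_2,j_3)$ so that all three subsquares in $w_X$ have small margin-minima: a pigeonhole over the $n$ columns shows that, once the row pair is fixed to have bounded total sum, some column triple makes each of the three subsquare totals $O(m)$. Reconciling this minimization with the positivity requirements of the L-walk---in particular when the rows of smallest sum fail the anchor conditions---is the delicate case analysis that drives the constant~$45$; the remaining estimates are routine.
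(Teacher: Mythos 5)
Your L-walks, where they exist, are essentially the transpose of the paper's ``row-good''/``column-good'' length-3 walks, and the counting step $|\{X : e\in w_X\}|=O(n)$ is in the right spirit. But there are two genuine gaps. First, the existence claim fails: under the stated hypotheses there are states with exactly one positive entry in every row and every column (e.g.\ $X=2I_3$ with $\mathbf{r}=\mathbf{c}=(2,2,2)$, so $r_1=c_1=2$ and $m=n=3$). For such an $X$ no row and no column contains two positive entries, so no orientation of the L and no choice of row pair yields a legal walk of your form; ``rotating the L-shape'' cannot rescue these states. These are precisely the ``bad'' states in the paper, which are routed through a separate closed walk of length $5$ on a $3\times 3$ subsquare (this is where the hypothesis $\min\{r_1,c_1\}\geq 2$ is actually used, to guarantee some diagonal entry is at least $2$). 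That third case is unavoidable, and it is what produces the dominant $O(mn)\cdot\binom{m}{2}\binom{n}{2}$ term and hence the constant in the lemma.

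Second, your strategy for the factor $K+1$ provably cannot be completed. Take $m=n=3$ and $X$ with every entry equal to $M$: every $2\times 2$ subsquare of $X$ has all four margins equal to $2M$, so no pigeonhole over rows or columns can make any usable subsquare total $O(m)$, and the number of fillings of any subsquare your walk could traverse is $\Theta(M)$, which is not bounded in terms of $m$ and $n$. So the ``refined choice'' you defer to does not exist. For comparison, the paper's congestion computation simply takes $P(e)^{-1}=\binom{m}{2}\binom{n}{2}$ for every transition used by the walks, i.e.\ it never introduces the factor counting the fillings of the altered subsquare, so it offers no mechanism for absorbing $K+1$ either; you have put your finger on a real feature of the heat-bath transition probabilities, but the fix you propose is not viable, and the step you describe as driving the constant $45$ is exactly the step that is missing.
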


\begin{proof}
Write $[a] = \{ 1,2,\ldots, a\}$ for $a\in\mathbb{Z}^+$.
From $X = (x_{i,j})\in\Omega_{\mathbf{r},\mathbf{c}}$, 
first suppose that there exists a 
5-tuple $(i_1,i_2,i_3,j_1,j_2)$ such
that 
\begin{itemize}
\item $i_1,i_2,i_3$ are distinct elements of $[m]$, 
\item $j_1,j_2$ are distinct elements of $[n]$,
\item $x_{i_1,j_1},\, x_{i_2,j_1},\, x_{i_3,j_2}$ are all positive.
\end{itemize}
Then $(i_1,i_2,i_3,j_1,j_2)$ is called \emph{row-good for $X$}, and $X$ is called \emph{row-good}.
If $X$ is row-good, fix the lexicographically least 5-tuple $(i_1,i_2,i_3,j_1,j_2)$ which is row-good for
$X$ and consider the following sequence of three transitions on the $3\times 2$ subsquare defined
by rows $i_1,i_2,i_3$ and columns $j_1,j_2$:
\[ \begin{pmatrix}  y_{1,1} & y_{1,2}\\  y_{2,1} & y_{2,2}\\ y_{3,1} & y_{3,2}\end{pmatrix} \,\, \Longrightarrow
  \,\, \begin{pmatrix}  y_{1,1}-1 & y_{1,2}+1\\  y_{2,1} & y_{2,2}\\ y_{3,1}+1 & y_{3,2}-1\end{pmatrix} \,\, \Longrightarrow
\,\,  \begin{pmatrix}  y_{1,1} & y_{1,2}\\  y_{2,1} -1 & y_{2,2}+1\\ y_{3,1}+1 & y_{3,2}-1\end{pmatrix} \,\, \Longrightarrow
\begin{pmatrix}  y_{1,1} & y_{1,2}\\  y_{2,1} & y_{2,2}\\ y_{3,1} & y_{3,2}\end{pmatrix}.
\]
(For notational convenience we have written $y_{k,\ell}$ for $x_{i_k,j_\ell}$ in the above.)
Note that all intermediate matrices are nonnegative, due to the row-good property.
This defines a walk $w_X$ of length 3 from $X$ to $X$ in the graph underlying the contingency chain. 

We can define 5-tuples $(i_1,i_2,j_1,j_2,j_3)$ which are \emph{column-good for $X$}
in the analogous way, and say that
$X$ is column-good if there is a 5-tuple which is column-good for $X$.   If $X$ is column-good then
taking the transpose of each matrix in the sequence of transitions above defines 
an odd walk $w_X$ of length 3 from $X$ to $X$.  

Finally, suppose that $X\in\Omega_{\mathbf{r},\mathbf{c}}$ is not row-good and 
is not column-good.
Such an $X$ is said to be \emph{bad}.
Then no row or column of $X$ contains more than one positive entry.   Since all row and column
sums are positive, it follows that $m=n\geq 3$ and that every row and column contains exactly one
positive entry.  
Let $(i_1,i_2,i_3,j_1,j_2,j_3)$ be the lexicographically-least 6-tuple such that
\begin{itemize}
\item $i_1,i_2,i_3$ are distinct elements of $[m]$, 
\item $j_1,j_2,j_3$ are distinct elements of $[n]$, 
\item $x_{i_1,j_1} \geq 2$, while  $x_{i_2,j_2}$ and $x_{i_3,j_3}$ are positive.
\end{itemize}
(The conditions on $\mathbf{r}$ and $\mathbf{c}$ guarantee that such a 6-tuple exists.)  Consider the following sequence
of 5 transitions, performed on the $3\times 3$ subsquare defined by rows $i_1,i_2,i_3$ and columns
$j_1,j_2,j_3$:
\begin{align*}
\begin{pmatrix} y_{1,1} & 0 & 0\\ 0 & y_{2,2} & 0\\ 0 & 0 & y_{3,3}\end{pmatrix}
  & \Longrightarrow \,\,
  \begin{pmatrix} y_{1,1}-1 & 1 & 0\\ 1 & y_{2,2}-1 &0\\ 0 & 0 & y_{3,3}\end{pmatrix} \,\, \Longrightarrow \,\,
  \begin{pmatrix} y_{1,1}-1 & 1 & 0\\ 0 & y_{2,2}-1 &1\\ 1 & 0 & y_{3,3}-1\end{pmatrix} \\
   & \Longrightarrow \,\,
    \begin{pmatrix} y_{1,1}-2 & 1 & 1\\ 1 & y_{2,2}-1 &0\\ 1 & 0 & y_{3,3}-1\end{pmatrix} \,\, \Longrightarrow \,\,
    \begin{pmatrix} y_{1,1}-1& 0 & 1\\ 0 & y_{2,2} &0\\ 1 & 0 & y_{3,3}-1\end{pmatrix} \\
      & \Longrightarrow \,\,
    \begin{pmatrix} y_{1,1}& 0 & 0\\ 0 & y_{2,2} &0\\ 0 & 0 & y_{3,3}\end{pmatrix}.
\end{align*}
This defines a walk $w_X$ of length 5 from $X$ to $X$ in the graph underlying the chain.

Now we must analyse the set 
$\mathcal{W} = \{ w_X : X\in\Omega_{\mathbf{r},\mathbf{c}}\}$ of odd walks 
defined above.
Let $e=(Z,Z')$ be a transition of the contingency chain.  
Then $Z$ and $Z'$ only differ in a $2\times 2$
subsquare defined by rows $i,i'$ and columns $j,j'$.    

First we seek row-good $X$ with $e\in w_X$.  Let $i''\not\in\{i,i'\}$ 
be another row index, and fix one of 
the 6 ways to arrange $(i,i',i'',j,j')$ as $(i_1,i_2,i_3,j_1,j_2)$.  This gives enough information to uniquely
identify a potential candidate for $X$.   For example, if the transition $e$ involves rows $i_1$ and $i_3$ then 
$X=Z$, while if the transition $e$ involves rows $i_2$ and $i_3$ then $X=Z'$.  If $e$ involves rows $i_1$ and $i_2$
then $e$ is the second transition in the sequence, and $X$ can be obtained from $Z$ by reversing
the first transition in the sequence: namely, adding 1 to entries $(i_1,j_1)$ and $(i_3,j_2)$ and
subtracting 1 from entries $(i_1,j_2)$ and $(i_3,j_1)$.  If $X$ is a valid contingency table then
$(i_1,i_2,i_3,j_1,j_2)$ is row-good for $X$.  If it is the lexicographically
least such 5-tuple for $X$ then $e\in w_X$.
This identifies at most $12(m-2)$ tables $X$ such that $e\in w_X$.  
(This is an overcount, but good enough for our purposes.)

By choosing a third column index $j''\not\in\{j,j'\}$, an analogous argument shows that there are at most
$12(n-2)$ column-good tables $X$ with $e\in w_X$.

Finally, we seek bad tables $X$  such that $e\in w_X$.
Choose a row index $i''\not\in\{ i,i'\}$ and a column index 
$j''\not\in\{j,j'\}$, and fix one of the at most
36 ways to arrange $(i,i',i'',j,j',j'')$ as $(i_1,i_2,i_3,j_1,j_2,j_3)$.  Now
each transition in the sequence alters a different $2\times 2$ subsquare except the first and fourth,
which both alter rows $i_1,i_2$ and columns $j_1,j_2$.  Hence, arguing as above, there are at
most two choices for $X$, for each fixed 6-tuple.   This gives at most $72(m-2)(n-2)$ bad tables
$X$ such that $e\in w_X$.

Combining all this, we find that the congestion parameter $\eta(\mathcal{W})$ satisfies
\[ \eta(\mathcal{W}) \leq \binom{m}{2}\,\binom{n}{2}\, \left(36(m-2)+36(n-2)+360(m-2)(n-2)\right)
    \leq 90\, m^3n^3,
    \]
and applying Lemma~\ref{lazy} completes the proof.    
\end{proof}

Again we observe that this bound on $(1+\lambda_{N-1})^{-1}$ is several orders
of magnitude lower than the best-known
bound on the second-largest eigenvalue~\cite{CDGJM}. 

\begin{remark}
It has recently been shown~\cite{GU} that the contingency chain described above has no negative eigenvalues.
We include Lemma~\ref{non-lazy-contingency} here to illustrate an application of Lemma~\ref{lazy}
involving walks of length greater than one.
\end{remark}

\end{document}